\newcommand{\bb}{\mathbb}
\newcommand{\C}{\bb C}
\newcommand{\Z}{\bb Z}
\newcommand{\R}{\bb R}
\newcommand{\Q}{\bb Q}
\newcommand{\EST}{Erd\"os-Sz\"usz-Tur\'an }
\newcommand{\SL}{\operatorname{SL}}
\newcommand{\GL}{\operatorname{GL}}
\newcommand{\pp}{\mathbf p}
\newcommand{\qq}{\mathbf q}
\newcommand{\xx}{\mathbf x}
\newcommand{\prim}{\operatorname{prim}}
\newcommand{\Id}{\operatorname{Id}}
\newcommand{\Mat}{\operatorname{Mat}}
\newcommand{\est}{\operatorname{EST}}
\newcommand{\M}{\mathcal M}
\newcommand{\om}{\omega}
\newcommand{\q}{\mathcal Q}
\newcommand{\hh}{\mathcal H}
\newcommand{\diag}{\operatorname{diag}}
\newcommand{\La}{\Lambda}
\newtheorem{Theorem}{Theorem}
\numberwithin{Theorem}{section}
\newtheorem{lemma}[Theorem]{Lemma}
\newtheorem*{remark*}{Remarks}
\newtheorem*{question*}{Question}
\newtheorem*{lemma*}{Lemma}
\newtheorem*{theorem*}{Theorem}
\numberwithin{equation}{section}
\begin{document}
\title[The Erd\"os-Sz\"usz-Tur\'an distribution]{The Erd\"os-Sz\"usz-Tur\'an distribution for equivariant processes}
\author{Jayadev S.~Athreya}
\author{Anish Ghosh}
\subjclass[2010]{primary: 37A17; secondary 37-06, 37-02}
\email{jathreya@uw.edu}
\email{ghosh@math.tifr.res.in}
\address{Department of Mathematics, University of Washington, Padelford Hall, Seattle, WA, 98195}
\address{School of Mathematics, Tata Institute of Fundamental Research, Homi Bhabha Road, Colaba, Mumbai 400005, India}

\thanks{Athreya is supported by National Science Foundation CAREER grant DMS 1351853. Ghosh acknowledges support of a UGC grant, a CEFIPRA grant and a DST Swarnajayanti fellowship. Ghosh acknowledges support of the Benoziyo Endowment Fund for the Advancement of Science at the Weizmann Institute. Parts of this work were completed while both authors were in residence at the Mathematical Sciences Research Institute in Berkeley, California, during the Spring 2015 semester, supported by the National Science Foundation Grant DMS 0932078 000.}
\begin{abstract}
We resolve problems posed by Kesten and Erd\"os-Sz\"usz-Tur\'an on probabilistic Diophantine approximation via methods of homogeneous dynamics. Our methods allows us to generalize the problem to the setting of general measure-valued processes in $\R^n$, and obtain applications to the distribution of point sets which occur in higher-dimensional Diophantine approximation and the geometry of translation surfaces.\end{abstract}
\maketitle
\setcounter{tocdepth}{1}

\small\tableofcontents

\section{Introduction}
\subsection{Dirichlet's Theorem} The most classical result in Diophantine approximation is Dirichlet's Theorem, which is stated in two forms: First, for all irrational $\alpha \in [0, 1]$, and any integer $Q \geq 1$, there is a $1 \le q \le Q$ and a $p \in \Z$ relatively prime to $q$ so that  \begin{equation}\label{eq:dirichlet:1} \left| \alpha q- p \right| \le \frac {1}{Q}.\tag{D1}\end{equation} As a corollary, one obtains that there are infinitely many $\frac{p}{q} \in \Q$ (with $gcd(p, q) =1$) satisfying \begin{equation}\label{eq:dirichlet} \left| \alpha - \frac p q \right| \le \frac {1}{q^2}\tag{D2}.\end{equation} In this paper, we consider the \emph{distribution} of the number of solutions to modified versions of \eqref{eq:dirichlet:1} and \eqref{eq:dirichlet} from a probabilistic perspective initiated by Erd\"os-Sz\"usz-Tur\'an~\cite{EST} and Kesten~\cite{Kesten}. Our methods show how this problem can be generalized and solved in many other geometric and number theoretic settings.

\subsection{The Erd\"os-Sz\"usz-Tur\'an and Kesten distributions}

\noindent In 1958~\cite{EST}, Erd\"os-Sz\"usz-Tur\'an introduced a problem in probabilistic Diophantine approximation: what is the probability $f(N, A, c)$ that a point $\alpha$ chosen from the uniform distribution on $[0,1]$ has a solution $\frac p q \in \Q$ to the modified Dirichlet equation \begin{equation}\label{eq:dirichlet:modified} \left| \alpha - \frac p q \right| \le \frac {A}{q^2} ,\end{equation} with denominator $q \in [N, cN]$? Here $A >0, c>1$ are fixed positive parameters, and $N$ is a parameter which goes to infinity. We note that by a well known result of Hurwitz, $A = \frac{1}{\sqrt{5}}$ is the best allowable constant so that (\ref{eq:dirichlet:modified}) has infinitely many solutions for all $\alpha$.  Given $A, c, N$, let $\est(A,c,N)(\alpha)$ be the number of solutions $p/q \in \Q$ with $\gcd(p, q) = 1$ to (\ref{eq:dirichlet:modified}). Letting $\alpha \in [0, 1]$ be a uniform random variable yields an integer-valued random variable $\est(A,c,N)$, with  $$P(\est(A,c,N) = k ) = m\left(\alpha \in [0, 1]: \mbox{ there are exactly } k \mbox{ solutions to (\ref{eq:dirichlet:modified})} \right),$$ where $m$ is Lebesgue measure on $[0,1]$. Then, the \EST question is the existence of the limit $$\lim_{N \rightarrow \infty} P(\est(A,c, N)>0).$$

\noindent Considering analagously a modified version of~\eqref{eq:dirichlet:1}, Kesten~\cite{Kesten} defined the sequence of random variables $K(A, N)$ (our notation differs from~\cite{Kesten}) as the number of solutions to
\begin{equation}\label{eq:dirichlet:modified:1} \left| \alpha q- p \right| \le \frac {A}{N}, 1 \le q \le N,\end{equation} where $\alpha$ is a uniform $[0,1]$ random variable. That is, $$P(K(A,N) = k ) = m\left(\alpha \in [0, 1]: \mbox{ there are exactly } k \mbox{ solutions to (\ref{eq:dirichlet:modified:1})} \right).$$
We consider the following
\begin{question*} What (if they exist) are the limiting (as $N \rightarrow \infty$) distributions of the random variables $\est(A,c, N)$ and $K(A, N)$?  \end{question*}
%

\noindent Our main results shows that the limiting distribution exists and can be viewed as the probability of a random unimodular lattice intersecting a certain fixed region. Let $X_2 = \SL(2, \R)/\SL(2, \Z)$ denote the space of unimodular lattices in $\R^2$,via the identification $$g \SL(2, \Z) \mapsto g \Z^2.$$ Let $\mu_2$ denote the Haar probability measure on $X_2$, and given $\La \in X_2, \La = g \Z^2$, let $\La_{\prim}$ be the set of primitive vectors in $\La$.

\begin{Theorem}\label{theorem:est:lattice} The limiting distribution of the random variables $\est(A,c,N)$ and $K(A, N)$ exist and denoting the random variables with these limiting distributions as $\est(A,c)$ and $K(A)$, we have \begin{equation}\label{eq:est:dist}P(\est(A,c) = k) = \mu_2(\La \in X_2: \#(\La_{\prim} \cap H_{A,c}) = k ),\end{equation} and 
\begin{equation}\label{eq:kesten:dist}P(K(A) = k) = \mu_2(\La \in X_2: \#(\La_{\prim} \cap R_{A}) = k )
\end{equation} where
\begin{equation}\label{def:region}
H_{A, c} = \{(x,y) \in \R^2~:~xy \leq A, 1 \leq y \leq c\},
\end{equation}
and
\begin{equation}\label{def:region:rectangle}
R_{A} = \{(x,y) \in \R^2~:~|x| \leq A, 0 \leq y \leq 1\}.
\end{equation}

\end{Theorem}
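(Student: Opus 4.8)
Our plan is to run the classical dictionary between one-dimensional Diophantine approximation and the geodesic flow on the space $X_2=\SL(2,\R)/\SL(2,\Z)$ of unimodular lattices, and then to feed it into the equidistribution of long closed horocycles. \emph{Step 1 (the lattice dictionary).} For $\alpha\in[0,1]$ set $u_\alpha=\begin{pmatrix}1&-\alpha\\0&1\end{pmatrix}$, $\La_\alpha=u_\alpha\Z^2\in X_2$, and $g_t=\diag(e^t,e^{-t})$. The nonzero vectors of $\La_\alpha$ are exactly the $(p-\alpha q,\,q)$ with $(p,q)\in\Z^2$, and such a vector is primitive in $\La_\alpha$ if and only if $\gcd(p,q)=1$. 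Writing $x=N(p-\alpha q)$ and $y=q/N$, so that $g_{\log N}(p-\alpha q,q)=(x,y)$, one checks directly that, once $N>A$, a coprime pair $(p,q)$ with $q>0$ contributes a solution to \eqref{eq:dirichlet:modified:1} exactly when $(x,y)\in R_A$, and the fraction $p/q$ is a solution to \eqref{eq:dirichlet:modified} with $q\in[N,cN]$ exactly when $(x,y)\in H_{A,c}$ --- the constraint $q>0$, which removes the redundancy $\pm v$ among vectors representing one fraction, being absorbed into the conditions $0\le y\le 1$, resp.\ $1\le y\le c$. Hence, as random variables on $([0,1],\mathrm{Leb})$,
\[
K(A,N)=\#\bigl((g_{\log N}\La_\alpha)_{\prim}\cap R_A\bigr),\qquad \est(A,c,N)=\#\bigl((g_{\log N}\La_\alpha)_{\prim}\cap H_{A,c}\bigr)
\]
for all sufficiently large $N$.

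\emph{Step 2 (equidistribution).} Since $u_1\in\SL(2,\Z)$, the curve $\alpha\mapsto u_\alpha\Z^2$, $\alpha\in[0,1]$, is precisely the closed horocycle through $\Z^2$; from the conjugation identity $g_tu_\alpha=u_{\alpha e^{2t}}g_t$ we see that its image under $g_{\log N}$ is the closed horocycle through $g_{\log N}\Z^2$, now of length $N^2$. Let $\nu_N$ denote the corresponding uniform probability measure on $X_2$, i.e.\ the pushforward of Lebesgue measure on $[0,1]$ under $\alpha\mapsto g_{\log N}u_\alpha\Z^2$. By Sarnak's theorem on the equidistribution of long closed horocycles, $\nu_N\to\mu_2$ in the weak-$*$ topology on probability measures on $X_2$; in particular $\int f\,d\nu_N\to\int f\,d\mu_2$ for every bounded continuous $f$ on $X_2$, with no escape of mass into the cusp.

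\emph{Step 3 (upgrading to the distributional identities).} Fix $R\in\{R_A,H_{A,c}\}$, put $N_R(\La)=\#(\La_{\prim}\cap R)$, and note that $R$ is bounded with $\mathrm{Leb}(\partial R)=0$, its boundary being a finite union of line segments and an arc of a hyperbola. Then: (i) on the open set $X_2\setminus D_R$, where $D_R=\{\La:\La_{\prim}\cap\partial R\ne\emptyset\}$, the integer-valued function $N_R$ is locally constant, because the finitely many primitive vectors of a lattice lying in the fixed compact set $\overline R$ depend continuously on the lattice and, for $\La\notin D_R$, none of them sits on $\partial R$; and (ii) $\mu_2(D_R)=0$, since by Siegel's mean value theorem $\int_{X_2}\#(\La_{\prim}\cap\partial R)\,d\mu_2=\zeta(2)^{-1}\,\mathrm{Leb}(\partial R)=0$. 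Consequently every level set $\{N_R=k\}$ has $\mu_2$-null topological boundary, and the portmanteau theorem applied to $\nu_N\to\mu_2$ yields
\[
P(\est(A,c,N)=k)=\nu_N\bigl(\{N_{H_{A,c}}=k\}\bigr)\longrightarrow\mu_2\bigl(\{N_{H_{A,c}}=k\}\bigr),
\]
together with the analogous statement with $R_A$ in place of $H_{A,c}$ for $K(A,N)$. These are exactly \eqref{eq:est:dist} and \eqref{eq:kesten:dist}, and in particular the limiting distributions exist.

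The bookkeeping of Step 1 and the soft measure theory of Step 3 are routine; the step that genuinely requires care is the legitimacy of the final passage in Step 3 --- testing the weak-$*$ limit of $\nu_N$ against the unbounded, discontinuous indicator of $\{N_R=k\}$. This rests on combining the $\mu_2$-negligibility of $D_R$ with the non-divergence of the horocycle measures $\nu_N$, which is precisely what Sarnak's theorem provides (alternatively, the Dani--Margulis non-divergence estimates for unipotent orbits give a more self-contained route). Once this is in place the theorem follows, and the generalizations announced in the abstract arise by replacing the closed horocycle and the regions $R_A,H_{A,c}$ with other equidistributing families of measures and the appropriate target sets.
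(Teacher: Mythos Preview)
Your proof is correct and follows essentially the same route as the paper's: the same lattice dictionary via $u_\alpha$ and $g_{\log N}$, followed by equidistribution of long closed horocycles (the paper cites Zagier rather than Sarnak, but this is the same theorem) and an approximation argument to pass from continuous test functions to the indicators of $\{N_R=k\}$. Your Step~3 is in fact more carefully justified than the paper's, which simply invokes a ``standard approximation argument''; your use of Siegel's formula to show $\mu_2(D_R)=0$ and the portmanteau theorem makes explicit exactly what that phrase hides.
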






\noindent Expressing the limiting distributions as distributions on the space of lattices allows us to apply classical results on the geometry of numbers to obtain moment and concentration estimates (Theorem~\ref{variance}). Furthermore, our translation of the problem into a geometric and dynamical problem is axiomatic and flexible, and as we will see in \S\ref{sec:generalized} applies to point sets associated to linear forms and translation surfaces. Our proof also shows that the same result holds as long as $\alpha$ is chosen from a probability measure with a continuous density (and in higher dimensions, certain natural classes of singular measures). Our main result (Theorem~\ref{theorem:est}) describes how to define, construct and compute related distributions in the general setting of equivariant processes, which we define precisely in \S\ref{sec:equivariant}. We note that a function very similar to the one considered in (\ref{eq:kesten:dist}) above has been computed explicitly in the paper \cite{SV} by Str\"{o}mbergsson and Venkatesh, and indeed an adaptation of their methods might allow for an explicit computation here as well.

\subsection{History}  This circle of problems has a long history, starting with the original paper of \EST~\cite{EST}. There, in addition to posing the problem of the limiting probability of $P(\est(A, c, N)>0)$ (they denoted this putative limit as $f(A, c)$), they showed that for $A \le \frac{c}{1+c^2}$, that the limit existed and $$f(A, c) = \frac{12}{\pi^2} A \log c.$$ Subsequently Kesten~\cite{Kesten} showed that the the limit exists under the assumption $Ac \le 1,$ and this assumption was removed by Kesten-Sos~\cite{KestenSos}.

We were introduced to this problem by Boca, Zaharescu, and Heersink (who has studied the problem of finding approximates with appropriate congruence conditions~\cite{Heersink}). Their methods, number theoretic in nature, yield explicit formulas for $f(A, c)$ (computed independently by Boca~\cite{Boca} and Xiong-Zaharescu~\cite{XZ}), and they also considered localizing $\alpha$ to smaller intervals.

Our results offer detailed information on the distributions of the random variables $\est(A,c)$ and $K(A)$, yielding the limiting distribution (not just the probability of positivity) of both random variables in both this setting and in a variety of other geometric and number theoretic contexts, and our methods can also localize $\alpha$ to even \emph{shrinking} intervals (\S\ref{sec:measures}).

Our results on the Kesten distribution were anticipated by Marklof~\cite[Theorem 4.4]{Marklof}, thought at the time he was not aware of Kesten's question. Marklof-Str\"ombergsson~\cite{MS, MSquasi} have given several beautiful applications of homogeneous dynamics to to statistical physics via understanding the fine scale statistics of point sets.

\subsection{Organization} In the remainder of this introduction, we prove Theorem~\ref{theorem:est:lattice}, using equidistribution of horocycles on the space $X_2$. {In \S\ref{sec:generalized}, we describe our general results in the settings of lattices; linear forms; diophantine approximation on curves, and translation surfaces. In \S\ref{sec:equivariant} we describe our general philosophy and state our main axiomatic theorem.  In \S\ref{sec:equilattices}, we prove our results in their various incarnations on the space of lattices, and in \S\ref{sec:translation}, we prove our results in the setting of translation surfaces. 

\noindent\textbf{Acknowledgments.} We thank the Mathematical Sciences Research Institute (MSRI), Berkeley for ideal working conditions. We thank Florin Boca, Byron Heersink, and Alexandru Zaharescu for introducing us to this problem and inspiring this project. J.S.A. would like to thank the Tata Institute of Fundamental Research for its hospitality in August 2015 and February 2016, and A.G. thanks the University of Washington for its hospitality in May 2016. We thank the referees for useful suggestions which have improved the exposition.
\subsection{Equidistribution on the modular surface}\label{sec:equi}

\noindent We relate the Erd\"os-Sz\"usz-Tur\'an and Kesten distributions to dynamics on the space of unimodular lattices, and prove Theorem~\ref{theorem:est:lattice}. This approach will generalize to higher dimensions, and will allow us derive a number of results using appropriate equidistribution results.  We begin with the original question of Erd\"os-Sz\"usz-Tur\'an and Kesten, proving Theorem~\ref{theorem:est:lattice}.


\subsubsection{Proof of Theorem~\ref{theorem:est:lattice}}We note $\est(A,c,N) = k$ if and only if $$\exists \mbox{ exactly }k \mbox{ distinct } \frac p q \in \Q \mbox{ such that } N \le q \le cN \mbox{ and } \left|\alpha - \frac p q \right| < \frac{A}{q^2}.$$ Equivalently, there are exactly $k$ vectors $$\begin{pmatrix}p\\q\end{pmatrix} \in \Z^2_{\prim} \mbox { such that }  \begin{pmatrix}x\\y\end{pmatrix} := u_{\alpha} \begin{pmatrix}p\\q\end{pmatrix} \in g_{\log N} H_{A, c},$$
where $$u_{\alpha} = \begin{pmatrix} 1 & -\alpha\\ 0 & 1 \end{pmatrix} \mbox{ and } g_t = \begin{pmatrix} e^{t} & 0\\ 0 & e^{-t}\end{pmatrix}.$$ This follows by rewriting $$ \left|\alpha - \frac p q \right| < \frac{c}{q^2}, N \le q \le cN $$ as $$ q|q\alpha-p| < A,$$ and then as $$|xy| < A, N \le y \le cN.$$
\noindent Thus, we are interested in the measure of the set of $\alpha \in [0, 1]$ satisfying
\begin{equation}\label{eq:Th1:2}
 \#(g_{\log N}u_{\alpha}\Z^{2}_{\prim} \cap H_{A, c} )= k.
\end{equation}

\noindent Let $\chi_{k}$ denote the indicator function of the set
\begin{equation}\label{unbounded}
\left\{\Lambda \in X~:~ \#\left(\Lambda_{\prim} \cap H_{A, c}\right)=k\right\}.
\end{equation}
To compute $P(\est(A,c) = k)$, we are interested in the $N \rightarrow \infty$ behavior of \begin{equation}\label{eq:chik}\int_{0}^{1}\chi_{k}(g_{\log N}u_{\alpha}\Z^{2}_{\prim})d\alpha.\end{equation}

\noindent  Let $\eta_N$ denote the measure $d\alpha$ on the set $\{g_{\log N}u_{\alpha}\Z^{2}_{\prim}: 0 \le \alpha \le 1\}$, so  we can rewrite (\ref{eq:chik}) as $\eta_N (\chi_K)$.  We will now use suitable approximations of the functions $\chi_{k}$ and apply Zagier's equidistribution theorem~\cite[p. 279]{Z}, which would tell us that $$\eta_N \longrightarrow \mu_2,$$ as $N \rightarrow \infty$, where the convergence is in the weak-$*$ topology. This gives
$$P(\est(A,c, N) = k)  = \eta_N(\chi_K) \underset{N \to \infty}{\longrightarrow} \mu_2(\chi_K).$$
It is important to note that this approximation process is complicated by the fact that the sets (\ref{unbounded}) are unbounded, so we cannot approximate $\chi_{k}$ using functions of compact support. However, the sets can be approximated from above and below by bounded continuous functions using that the boundary of the support of $\chi_{k}$ has measure zero with respect to the Haar measure $\mu_2$ on $X_2$. We refer the reader to \cite{MS} which has a more general form of this argument. Indeed, as pointed out to us by an anonymous referee, one can apply (6.28) in Theorem 6.7 of loc. cit., with $\alpha=(0,0), m=1, M=I$ in $\SL(2, \R)$, and for a suitable choice of set $\mathfrak{B}_t^{(i)}$ defined by (\ref{unbounded}) independent of $t$.

For the Kesten distribution, we note by a similar argument that $K(A, N) = k$ if and only if $$ \#\left(g_{\log N}u_{\alpha}\Z^{2}_{\prim} \cap R_{A} \right) = k.$$ Thus, proceeding as above, we obtain \eqref{eq:kesten:dist}. 
\qed\medskip

\subsubsection{Measures and windows}\label{sec:measures} The proof of Theorem~\ref{theorem:est:lattice} in fact yields much more information. A strengthening of Zagier's theorem due to Shah~\cite{Shah} allows us to obtain the equidistribution result for any absolutely continuous measure on $[0,1]$. Thus, the limiting random variables $\est$ and $K$ do not depend on the initial distribution of $\alpha$ (as long as it is continuous). A different strengthening of Zagier's result is due to Hejhal~\cite{Hejhal} (with subsequent work of Strombergsson~\cite{Strombergsson}), which implies that we can sample $\alpha$ from smaller subintervals depending on $N$, as long as the subintervals shrink no faster than $N^{-1/2}$.

We can also consider the \EST and Kesten distributions associated to solutions of \eqref{eq:dirichlet:modified} and \eqref{eq:dirichlet:modified:1} with $c_1N \le q \le c_2N$, with $0<c_1<c_2$. The limiting distributions will again be given by the probability random lattices intersect fixed regions in $k$ points, with the regions being given by $$H_{A,c_1,c_2} =  \{(x,y) \in \R^2~:~xy \leq A, c_1 \leq y \leq c_2\}$$ and $$R_{A, c_1,c_2} = \{(x,y) \in \R^2~:~|x| \leq A, c_1 \leq y \leq c_2\}.$$

\subsubsection{Moments and concentration}\label{sec:momentsexample} To compute moments of the \EST and Kesten distribution, we need to understand the quantities $$\sum_{k=0}^{\infty} k^t P(X=k), t \in \R,$$ where $X$ is either $\est(A,c)$ or $K(A)$. For $t=1$, we can rewrite this as $$\int_{X_2}  \#\left(\La_{\prim} \cap H_{A,c}\right) d\mu_2(\La).$$ By the Siegel mean value theorem~\cite{Siegel}, we have $$\int_{X_2}  \#\left\{\La_{\prim} \cap H_{A,c}\right\} d\mu_2(\La) = \frac{6}{\pi^2} \left| H_{A,c}\right| = \frac{12}{\pi^2} A \log C.$$ This is the \emph{expected} number of solutions to (\ref{eq:Th1:2}) (in the $N\rightarrow\infty$ limit). Note that if $A \le \frac{c}{1+c^2}$, $$P(\est(A,c)>0) = \sum_{k \geq 1} P(\est(A,c)=k) = \frac{12}{\pi^2} A \log C,$$ so we have $$ E(\est(A,c)) = \sum_{k \geq 1} kP(\est(A,c) =k) = \sum_{k \geq 1} P(\est(A,c)=k),$$ so for $k>1$, $P(\est(A,c) =k)=0$, that is, the \EST distribution is concentrated at $1$. For the Kesten distribution, similar computations yield the mean, $$E(K(A)) = \frac{6}{\pi^2} \left|R_A\right| = \frac{6A}{\pi^2}.$$ In the setting of higher-dimensional Diophantine approximation, we will obtain bounds on higher moments via classical results on the geometry of numbers.

\section{Erd\"os-Sz\"usz-Tur\'an and Kesten distributions in higher dimensions}\label{sec:generalized}

\subsection{Diophantine approximation} We start with a natural higher dimensional generalization of the original \EST problem. Let $d \geq 2 = m + 1$ ($d = 2, m = 1$ corresponds to our original problem), and fix $A>0, c>1$ and a norm $\|\cdot\|$ on $\R^{m}$. Let $\bf{x}$ be chosen from the uniform distribution on $[0,1]^m$, and let $\est_d(A, c, N)$ denote the number of solutions $(\pp, q) \in \Z^m \times \Z$ (with $(\pp, q)$ primitive) to the modified Dirichlet equation \begin{equation}\label{eq:dirichlet:higher} \left\| \mathbf{x}q - \pp \right\| \le Aq^{-\frac 1 d},\end{equation} with $q \in [N, cN]$, and $K_d(A, N)$ denote the number of solutions to \begin{equation}\label{eq:dirichlet1:higher} \left\| \mathbf{x}q - \pp \right\| \le AN^{-\frac 1 d},\end{equation} with $q \in [1, N]$.

\begin{Theorem}\label{theorem:est:lattice:d} The limiting distributions of $\est_d(A,c,N)$ and $K_d(A,N)$ exist and the distributions of the limiting random variables $\est_d(A,c)$ and $K_d(A)$ are given by  $$P(\est_d(A, c) ) = \mu_d(\La \in X_d: \#\left(\La_{\prim} \cap H_{d,A,c}\right)= k )$$ and $$P(K_d(A) ) = \mu_d(\La \in X_d: \#\left(\La_{\prim} \cap R_{d,A}\right) = k )$$where\begin{equation}\label{def:region:d}
H_{d, A, c} = \{(\xx,y) \in \R^d \times \R~:~\|\xx\|y \leq A, 1 \leq y \leq c\}
\end{equation}
and \begin{equation}\label{def:region:d:rectangle}
R_{d, A} = \{(\xx,y) \in \R^d \times \R~:~\|\xx\| \leq A, 0 \leq y \leq 1\}
\end{equation}\end{Theorem}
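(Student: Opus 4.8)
The plan is to run the proof of Theorem~\ref{theorem:est:lattice} from \S\ref{sec:equi}, replacing the modular surface by a higher‑rank homogeneous space. Write $d = m+1$, so that a pair $(\pp, q) \in \Z^m \times \Z$ is a vector in $\Z^d$; let $X_d = \SL(d,\R)/\SL(d,\Z)$ be the space of unimodular lattices in $\R^d$, with Haar probability measure $\mu_d$, and put
\[
u_{\xx} = \begin{pmatrix} I_m & -\xx \\ 0 & 1 \end{pmatrix}, \qquad g_t = \diag\big(e^{t/m},\dots,e^{t/m},e^{-t}\big)\in\SL(d,\R),
\]
with $m$ entries equal to $e^{t/m}$. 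Exactly as \eqref{eq:dirichlet:modified} was rewritten as \eqref{eq:Th1:2} in \S\ref{sec:equi} — now with $|x|$ replaced by the norm $\|\xx\|$, the point being that the Dirichlet exponent in \eqref{eq:dirichlet:higher} and the shape of $H_{d,A,c}$ in \eqref{def:region:d} are arranged so that applying $g_{\log N}u_{\xx}$ to $(\pp,q)\in\Z^d$ removes the $q$‑dependence — one checks that $\est_d(A,c,N) = k$ if and only if
\[
\#\big(g_{\log N}\,u_{\xx}\,\Z^d_{\prim} \cap H_{d,A,c}\big) = k,
\]
and that $K_d(A,N) = k$ if and only if $\#(g_{\log N}\,u_{\xx}\,\Z^d_{\prim} \cap R_{d,A}) = k$.

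Let $\chi_k$ be the indicator of $\{\La \in X_d : \#(\La_{\prim}\cap H_{d,A,c}) = k\}$, so that $P(\est_d(A,c,N) = k) = \eta_N(\chi_k)$, where $\eta_N$ is the pushforward to $X_d$ of Lebesgue measure on the cube $[0,1]^m$ under $\xx \mapsto g_{\log N}\,u_{\xx}\,\Z^d_{\prim}$. Since $\{u_{\xx} : \xx\in\R^m\}$ is precisely the (abelian) expanding horospherical subgroup of $g_t$, the required input is the equidistribution $\eta_N \to \mu_d$ in the weak‑$*$ topology as $N\to\infty$: this is the higher‑dimensional analogue of Zagier's theorem~\cite{Z}, and follows from mixing of the $g_t$‑action on $(X_d,\mu_d)$ together with the classical thickening argument for translates of a horospherical subgroup. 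As the measures $\eta_N$ and $\mu_d$ are all probability measures there is no escape of mass, so the convergence holds against bounded continuous functions as well. I will record this equidistribution — in a form flexible enough to allow absolutely continuous initial densities on $[0,1]^m$ and cubes shrinking with $N$, the analogues of the refinements in \S\ref{sec:measures} — in \S\ref{sec:equilattices}.

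To conclude, one passes $\chi_k$ through the limit. Because $1\le y\le c$ (resp.\ $0\le y\le 1$) forces $\|\xx\|$ to be bounded, the regions $H_{d,A,c}$ and $R_{d,A}$ are bounded, hence $\#(\La_{\prim}\cap H_{d,A,c})$ is finite for every $\La$ and $\chi_k$ is a bounded function on $X_d$. Moreover $\chi_k$ is $\mu_d$‑a.e.\ continuous: its discontinuity locus is contained in $\{\La : \La_{\prim}\cap\partial H_{d,A,c}\neq\emptyset\}$, and this set is $\mu_d$‑null, since by the Siegel mean value theorem~\cite{Siegel} the expected number of primitive points of a random lattice lying in the Lebesgue‑null set $\partial H_{d,A,c}$ is zero. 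So $\chi_k$ can be sandwiched between continuous functions agreeing with it off a $\mu_d$‑null set, and weak‑$*$ convergence gives $\eta_N(\chi_k)\to\mu_d(\chi_k)$, which is the asserted formula for $\est_d(A,c)$; the Kesten statement follows verbatim with $H_{d,A,c}$, $[N,cN]$ replaced by $R_{d,A}$, $[1,N]$.

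For Theorem~\ref{theorem:est:lattice:d} as stated I do not expect a serious obstacle: the horospherical equidistribution is classical, the null‑boundary property is immediate from Siegel's theorem, and the only step demanding care is the bookkeeping that matches the Dirichlet exponent in \eqref{eq:dirichlet:higher} to the region \eqref{def:region:d} — routine once it has been carried out for $d=2$. The genuine difficulties arise in the refinements foreshadowed in \S\ref{sec:measures}: cubes shrinking with $N$, and initial measures supported on lower‑dimensional submanifolds (as in Diophantine approximation on curves), which demand \emph{effective} and \emph{restricted} versions of the equidistribution $\eta_N\to\mu_d$. It is there, not in Theorem~\ref{theorem:est:lattice:d} itself, that the argument must genuinely go beyond the $d=2$ case.
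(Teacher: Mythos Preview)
Your proposal is correct and follows essentially the same route as the paper: rewrite the counting condition as $\#(g_{\log N}u_{\xx}\Z^d_{\prim}\cap H_{d,A,c})=k$, invoke equidistribution of the expanding horospherical orbit (the paper cites Rogers and Kleinbock--Margulis rather than mixing-plus-thickening, but these are the same circle of ideas), and pass the indicator $\chi_k$ through the limit. Your handling of this last step --- arguing $\mu_d$-a.e.\ continuity of $\chi_k$ via the Siegel formula applied to the Lebesgue-null set $\partial H_{d,A,c}$ --- is in fact more careful than the paper's, which simply asserts that $\chi_k$ can be approximated by compactly supported continuous functions.
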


\subsection{Linear Forms}\label{sec:linear} Next, we consider systems of linear forms. Let $d = m+n$, $m, n \geq 1$, fix $A>0, c>1$, and norms $\|\cdot\|_m$ and $\|\cdot\|_n$ on $\R^m$ and $\R^n$. We consider the set of $m$ linear forms in $n$ variables, parameterized by $M_{m\times n}(\R)$, the set of $m \times n$ real matrices. We identify $M_{m \times n}(\R)$ with $\R^{mn}$. Let $X$ be chosen from the uniform distribution on $[0,1]^{mn}$. We define the random variable $\est_{m\times n}(A, c, N)$ as the number of solutions $(\pp, \qq) \in \Z^m \times \Z^n$ (with $(\pp, \qq)$ primitive) to the modified Dirichlet equation \begin{equation}\label{eq:dirichlet:linear} \left\| X\qq -\pp \right\|_m \le A\|\qq\|_n^{-\frac n m},\end{equation} with $\|q\|_n \in [N, cN]$. Similarly, we define $K_{m\times n}(A,N)$ as the number of solutions to  \begin{equation}\label{eq:dirichlet1:linear} \left\| X\qq -\pp \right\|_m \le A|N|^{-\frac n m},\end{equation} with $\|q\|_n \in [1, N]$.

\begin{Theorem}\label{theorem:est:linear} The limiting distributions of the random variables $\est_{m \times n}(A,c, N)$ and $K_{m \times n}(A, N)$ exist and the distributions of the limiting random variables $\est_{m\times n} (A, c)$ and $K_{m \times n} (A)$ are given by  $$P(\est_{m \times n}(A, c)=k) = \mu_d(\La \in X_d: \#\left(\La_{\prim} \cap H_{m\times n,A,c}\right) = k )$$and $$ P(K_{m \times n}(A)=k) = \mu_d(\La \in X_d: \#\left(\La_{\prim} \cap R_{m\times n,A}\right) = k )$$

\begin{equation}\label{def:region:mn}
H_{m\times n, A, c} = \{(\xx,\mathbf y) \in \R^m \times \R^n~:~\|\xx\|_m\|\mathbf y\|_n \leq A, 1 \leq \|\mathbf y\|_n \leq c\}.
\end{equation}

\begin{equation}\label{def:region:mn:rectangle}
R_{m\times n, A} = \{(\xx,\mathbf y) \in \R^m \times \R^n~:~\|\xx\|_m \leq A, 0 \leq \|\mathbf y\|_n \leq 1\}.
\end{equation}

\end{Theorem}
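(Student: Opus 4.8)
The plan is to run the argument of the proof of Theorem~\ref{theorem:est:lattice} essentially verbatim, one dimension up: on $X_d$ with $d = m+n$, with the horocycle flow replaced by an expanding translate of the periodic horosphere attached to the relevant block‑unipotent subgroup. Set
\[
u_X = \begin{pmatrix} I_m & -X \\ 0 & I_n \end{pmatrix}, \qquad g_t = \diag\!\left( e^{t/m} I_m,\, e^{-t/n} I_n \right),
\]
both in $\SL(d,\R)$; note $g_t$ normalizes $U := \{u_X : X \in M_{m\times n}(\R)\}$, expanding it as $t \to +\infty$. Choose $s_N \asymp \log N$ so that, writing $(\xx, \mathbf y) := g_{s_N} u_X (\pp, \qq)$, the constraint $\|\qq\|_n \in [N, cN]$ becomes $1 \le \|\mathbf y\|_n \le c$. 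A direct computation, using \eqref{eq:dirichlet:linear}, then shows that the conjunction of $\|\qq\|_n \in [N, cN]$ with \eqref{eq:dirichlet:linear} is transformed into the condition $(\xx,\mathbf y) \in H_{m\times n, A, c}$. Since $g_{s_N} u_X \in \SL(d, \R)$ is a linear automorphism it preserves primitivity of integer vectors, so if $\chi_k$ denotes the indicator of $\{\La \in X_d : \#(\La_{\prim} \cap H_{m\times n, A, c}) = k\}$, then
\[
P\big(\est_{m\times n}(A, c, N) = k\big) = \int_{[0,1]^{mn}} \chi_k\big(g_{s_N} u_X \Z^d\big)\, dX .
\]
The Kesten variable $K_{m\times n}(A, N)$ is handled identically, with $R_{m\times n, A}$ in place of $H_{m\times n, A, c}$.

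The engine is equidistribution. Let $\eta_N$ be the pushforward of Lebesgue measure on $[0,1]^{mn}$ under $X \mapsto g_{s_N} u_X \Z^d$, so the integral above is $\eta_N(\chi_k)$. Because $u_{X+M}\Z^d = u_X \Z^d$ for $M \in M_{m\times n}(\Z)$, this $\eta_N$ is the $g_{s_N}$‑translate of the uniform probability measure on the compact periodic $U$‑orbit of $\Z^d$ --- an expanding horosphere, since $s_N \to \infty$. The required input is that $\eta_N \to \mu_d$ in the weak-$*$ topology, the higher‑rank counterpart of Zagier's theorem used for $X_2$; this follows from mixing of the $g_t$‑action by the standard thickening argument (Margulis, Eskin--McMullen, Kleinbock--Margulis), and a refinement of Shah together with the classification of unipotent‑invariant measures lets $X$ be sampled from any absolutely continuous density on $[0,1]^{mn}$. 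In particular $\mu_d$ is a probability measure and no mass escapes to infinity. This is precisely the type of input exploited by Marklof--Str\"ombergsson.

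It remains to pass from weak-$*$ convergence of measures to convergence of the integrals $\eta_N(\chi_k)$, exactly as in the proof of Theorem~\ref{theorem:est:lattice}. The regions $H_{m\times n, A, c}$ and $R_{m\times n, A}$ are bounded and have boundary of Lebesgue measure zero in $\R^d$; by the Siegel mean value theorem~\cite{Siegel} the $\mu_d$‑measure of the set of lattices possessing a primitive vector on such a boundary vanishes, so $\chi_k$ is bounded and $\mu_d$‑almost everywhere continuous. The portmanteau theorem then yields $\eta_N(\chi_k) \to \mu_d(\chi_k)$, which is the asserted formula and in particular establishes existence of the limiting distributions.

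The one genuinely dynamical ingredient is the equidistribution statement of the second paragraph; although classical, it is the step carrying all the ergodic‑theoretic content, and it is also what would need strengthening (to translates of curved or of shrinking pieces of the horosphere) for the localized versions mentioned in \S\ref{sec:measures}. The step most in need of care is the dictionary of the first paragraph: choosing $g_t$ and $s_N$ so that the renormalized window is exactly $H_{m\times n, A, c}$ (respectively $R_{m\times n, A}$), and checking that primitivity of $(\pp, \qq)$ in $\Z^d$ corresponds to primitivity in the renormalized lattice.
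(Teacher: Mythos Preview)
Your proposal is correct and follows essentially the same approach as the paper: the paper's proof in \S\ref{sec:equilattices} sets up the identical dictionary via $u_X$ and the one-parameter diagonal $g_t$, reduces the event $\est_{m\times n}(A,c,N)=k$ to a lattice-point count in $H_{m\times n,A,c}$ for the lattice $g_{s_N}u_X\Z^d$, and then invokes equidistribution of expanding horospherical translates (citing Rogers and Kleinbock--Margulis) together with the standard approximation of $\chi_k$ by compactly supported continuous functions. Your write-up is in fact slightly more careful than the paper's in two places: you note explicitly that $s_N$ must be chosen proportional to $\log N$ (the correct scaling is $s_N=n\log N$, whereas the paper writes $g_{\log N}$), and you justify the passage from weak-$*$ convergence to convergence of $\eta_N(\chi_k)$ via the portmanteau theorem and the Siegel formula, rather than leaving it as a ``standard approximation argument.''
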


\noindent We note that Theorem \ref{theorem:est:lattice:d} is a special case of the above Theorem with $n = 1$.

\subsection{Approximation on Curves} \noindent The subject of metric Diophantine approximation \emph{on manifolds} studies typical Diophantine properties of points on manifolds. It is well known and easy to see using the Borel Cantelli Lemma, that almost every real number is \emph{not very well approximable}. This means that the inequality
$$ |qx - p| < 1/|q|^{1+\epsilon} $$
\noindent has at most finitely many solutions. This result generalises easily to arbitrary dimension. In $1932$, K. Mahler conjectured that almost every point on the curve $(x, x^2, \dots, x^n)$ is not very well approximable. Mahler's conjecture started the subject and there have been many subsequent works, including recent dramatic advances due to Kleinbock-Margulis, Beresnevich, Velani, and others. The constraint of lying on a manifold makes the subject considerably more complicated than classical Diophantine approximation. Nevertheless, our approach can be used to compute \EST and Kesten distributions for vectors lying on curves. Let $d=n+1$, and $\phi : [a, b] \to \R^{n}$ be an analytic curve whose image is not contained in a proper affine subspace, and $\|\cdot\|$ denote a norm on $\R^{n-1}$. Let $x$ be chosen from the uniform distribution on $[a,b]$, and let $\est_{\phi}(A,c)$ denote the random variable counting solutions to $$\|q\phi(x) - \mathbf p\| < Aq^{-\frac 1 d}, N \leq q \leq cN.$$ Let $K_{\phi}(A,N)$ denote the random variable counting solutions to $$\|q\phi(x) - \mathbf p\| < AN^{-\frac 1 d}, 1 \leq q \leq N.$$

\begin{Theorem}\label{Th:main2} The random variables $\est_{\phi}(A,c,N)$ and $K_{\phi}(A,N)$ have limiting distributions, and the limiting random variables $\est_{\phi}(A,c)$ and $K_{\phi}(A)$ have distributions given by $$P(\est_{\phi}(A,c) =k) = \ \mu\left\{\Lambda \in X_d~:~ \#(\Lambda_{\prim} \cap H_{d, A,c}) = k\right\},
$$ and $$P(K_{\phi}(A,c) =k) = \ \mu\left\{\Lambda \in X_d~:~ \#(\Lambda_{\prim} \cap R_{d, A}) = k\right\},
$$ where $H_{d,A,c}$ and $R_{d, A}$ are as in Theorem~\ref{theorem:est:lattice:d}
\end{Theorem}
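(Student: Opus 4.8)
The plan is to run, essentially verbatim, the argument that proves Theorem~\ref{theorem:est:lattice} in \S\ref{sec:equi} (and its higher-dimensional refinement Theorem~\ref{theorem:est:lattice:d}), replacing Zagier's equidistribution of expanding horocycles by Shah's equidistribution of expanding translates of non-degenerate analytic curves. Write $d=n+1$, let $u_{\mathbf y}\in\SL(d,\R)$ be the unipotent matrix acting by $u_{\mathbf y}(\pp,q)=(\pp-q\mathbf y,q)$ for $\mathbf y\in\R^n$, let $g_t=\diag(e^{t/n},\dots,e^{t/n},e^{-t})$, and set $t_N=\log N$. Exactly as in \S\ref{sec:equi}, after clearing denominators and applying $g_{t_N}u_{\phi(x)}$ one checks that $\est_\phi(A,c,N)=k$ if and only if exactly $k$ primitive vectors of $g_{t_N}u_{\phi(x)}\Z^d$ lie in $H_{d,A,c}$, and $K_\phi(A,N)=k$ if and only if exactly $k$ of them lie in $R_{d,A}$, with $H_{d,A,c}$ and $R_{d,A}$ as in Theorem~\ref{theorem:est:lattice:d}. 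Hence, writing $\chi_k$ for the indicator of $\{\La\in X_d:\#(\La_{\prim}\cap H_{d,A,c})=k\}$ and $\psi_k$ for the analogous indicator attached to $R_{d,A}$,
\begin{equation*}
P\bigl(\est_\phi(A,c,N)=k\bigr)=\frac{1}{b-a}\int_a^b\chi_k\bigl(g_{t_N}u_{\phi(x)}\Z^d_{\prim}\bigr)\,dx,
\end{equation*}
and likewise for $P(K_\phi(A,N)=k)$ with $\psi_k$ in place of $\chi_k$.

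Next I would invoke the equidistribution input. Let $\eta_N$ be the pushforward of the normalized measure $\tfrac{1}{b-a}\,dx$ on $[a,b]$ under $x\mapsto g_{t_N}u_{\phi(x)}\Z^d$. Since $\phi$ is analytic and $\phi([a,b])$ is not contained in a proper affine subspace of $\R^n$, Shah's theorem on equidistribution of expanding translates of analytic curves~\cite{Shah} (a consequence of Ratner's theorems, with the Kleinbock--Margulis non-divergence estimates precluding escape of mass into the cusp) gives $\eta_N(f)\to\mu_d(f)$ for every bounded continuous $f$ on $X_d$; here one needs only to match the one-parameter subgroup $\{g_t\}$ and the translate $g_{t_N}u_{\phi(x)}\Z^d$ with those in Shah's statement, noting that the image of $u_{\cdot}$ lies in the expanding horospherical subgroup of $\{g_t\}$. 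This is the one step where genuinely new and deep input is required: in the $d=2$ case an entire horocycle is translated, whereas here only a one-dimensional curve inside the $(d-1)$-dimensional expanding horosphere is translated, and it is precisely the non-degeneracy hypothesis on $\phi$ that forces the translates to equidistribute in all of $X_d$ rather than in a proper homogeneous submanifold.

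Finally I would pass to the limit for the bounded but discontinuous functions $\chi_k$ and $\psi_k$. The discontinuity locus of $\chi_k$ is contained in $\{\La\in X_d:\La_{\prim}\cap\partial H_{d,A,c}\neq\emptyset\}$; since $\partial H_{d,A,c}$ is a finite union of smooth hypersurfaces, this locus is a countable union of positive-codimension submanifolds of $X_d$, hence $\mu_d$-null (equivalently, the Siegel mean value theorem shows the expected number of primitive points on $\partial H_{d,A,c}$ is zero). Thus $\{\La:\#(\La_{\prim}\cap H_{d,A,c})=k\}$ is a $\mu_d$-continuity set, and since $\eta_N\to\mu_d$ in the sense above with no loss of mass, a standard approximation (portmanteau) argument gives $\eta_N(\chi_k)\to\mu_d(\chi_k)$, i.e.
\begin{equation*}
P\bigl(\est_\phi(A,c,N)=k\bigr)\longrightarrow\mu_d\bigl\{\La\in X_d:\#(\La_{\prim}\cap H_{d,A,c})=k\bigr\},
\end{equation*}
and the identical argument with $\psi_k$ and $R_{d,A}$ yields the Kesten assertion. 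The main obstacle is entirely concentrated in the equidistribution step: once Shah's theorem is granted, the remainder is the routine reduction and approximation already carried out for $d=2$ in \S\ref{sec:equi}.
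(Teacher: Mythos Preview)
Your proposal is correct and follows essentially the same route as the paper: translate the counting problem into primitive lattice points of $g_{t_N}u_{\phi(x)}\Z^d$ landing in $H_{d,A,c}$ or $R_{d,A}$, then invoke Shah's equidistribution of expanding translates of non-degenerate analytic curves, and finish with the standard approximation argument. One small correction: the relevant reference is~\cite{Shah1} (Shah's 2009 Inventiones paper on curves), not~\cite{Shah}, which is the earlier 1996 paper on full horospherical orbits.
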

\medskip
\noindent
\textbf{Remark:} There is also an analogue of Theorem~\ref{theorem:est:linear} for curves in the space of linear $\R^{mn}$, which is exactly parallel to Theorem~\ref{Th:main2}.

\subsection{Measures and windows} As in the setting of $1$-dimensional approximation, we can also work with $q$ (or $\|q\|$) in appropriate subranges of the form $[c_1N, c_2N]$ with appropriate changes to the limiting distributions (replacing the $y$ range with $[c_1, c_2]$). Additionally, choosing absolutely continuous measures also does not change the limiting distribution.

\subsection{Moments and concentration}\label{sec:moments} Classical results from the geometry of numbers allow us to compute moments of the random variables $\est$ and $K$. We recall the definition of the \emph{Siegel transform}: given $f \in C_c(\R^d)$ and $\La \in X_d$ define $$\widehat{f}(\La) = \sum_{v \in \La_{\prim}} f(v).$$ Siegel showed $$\int_{X_d} \widehat{f} d\mu_d = \frac{1}{\zeta(d)} \int_{\R^d} fdm,$$ where $f$ is Lebesgue measure. Thus, the expectation of the random variables $\est$ and $K$ is proportional to the volume of the regions $H_d(A,c)$ and $R_d(A)$. $R_{d}(A)$ grows polynomially in $A$ and $H_{d}(A, c)$ polynomially in $c$ and logarithmically in $A$.

Building on Siegel's work, Rogers~\cite{Rogers, Rogers:sets} and Schmidt~\cite{Schmidt} computed bounds for higher moments of $\widehat{f}$.  These can be exploited to give precise moment estimates for $\est$ and $K$, and yield non-trivial concentration phenomenon. For example, a consequence of~\cite[Lemma 4]{Rogers:sets} shows that for integers $p< d$, $\widehat{f} \in L^p(\mu_d)$, and moreover $$\| \widehat{f}\|_p^p \le  \|f\|_1^{p} + C_{p,d} \|f\|_1^{p-1}.$$ Let $$M_{p,d}(f) := \left\|\widehat{f} - \frac{1}{\zeta(d)} \int_{\R^d} fdm\right\|^p_p.$$ We have $$\mu\left(\La \in X_d: \left|\widehat{f} - \frac{1}{\zeta(d)} \int_{\R^d} fdm\right| > T\right) <\frac{M_{p,d}(f)}{T^p}.$$
Specializing to the case $p=2$:

\begin{Theorem}\label{variance}(Rogers~\cite[Theorem 4]{Rogers}, Schmidt~\cite[Theorem 3]{Schmidt}, see also~\cite[Lemma 4.3]{AMarg}) Let $X$ be either $\est_{m \times n}(A, c)$ or $K_{m \times n}(A)$. There is a constant $C_d$, depending only on dimension $d = m+n$, so that $$V(X)= E((X-E(X))^2)  \le C_d E(X).$$ In particular, for any $T>0$, $$P\left( \left|X - E(X)\right| > T\sqrt{E(X)}\right) \le \frac{C_d}{T^2}.$$

\end{Theorem}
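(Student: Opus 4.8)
The plan is to recognize $X$ as a Siegel transform and to apply the second moment formula of Rogers and Schmidt to the regions appearing in Theorems~\ref{theorem:est:lattice:d} and~\ref{theorem:est:linear}.

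Write $\Omega\subset\R^d$ for the relevant region: $\Omega=H_{m\times n,A,c}$ when $X=\est_{m\times n}(A,c)$, and $\Omega=R_{m\times n,A}$ when $X=K_{m\times n}(A)$. In both cases $\Omega$ is a bounded Borel set; its indicator $\chi_\Omega$ is not in $C_c(\R^d)$, so all transform identities below are taken in the usual extension to bounded Borel functions by monotone approximation. By Theorems~\ref{theorem:est:lattice:d} and~\ref{theorem:est:linear}, $X$ has the same distribution as $\La\mapsto\#(\La_{\prim}\cap\Omega)=\widehat{\chi_\Omega}(\La)$ on $(X_d,\mu_d)$, and Siegel's formula gives $\mu=E(X)=\zeta(d)^{-1}|\Omega|$.

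For the variance I would start from
\[
\widehat{\chi_\Omega}(\La)^2=\sum_{v,w\in\La_{\prim}}\chi_\Omega(v)\,\chi_\Omega(w)
\]
and split the sum over ordered pairs $(v,w)$ according to whether $w=\pm v$ or $v,w$ are linearly independent. The diagonal part equals $\widehat{\chi_\Omega}(\La)+\widehat{\chi_{\Omega\cap(-\Omega)}}(\La)$, whose $\mu_d$-integral is $\zeta(d)^{-1}(|\Omega|+|\Omega\cap(-\Omega)|)\le 2\mu$. For the off-diagonal part I would invoke the Rogers--Schmidt second moment formula (Rogers~\cite[Theorem~4]{Rogers}, Schmidt~\cite[Theorem~3]{Schmidt}, packaged as~\cite[Lemma~4.3]{AMarg}), which identifies $\int_{X_d}\widehat{\chi_\Omega}^2\,d\mu_d$ with $\mu^2$ plus correction terms built from convergent sums of integrals of the form $\int_{\R^d}\chi_\Omega(x)\,\chi_\Omega(rx)\,dx$ over rationals $r$; since $\Omega$ is bounded, the whole correction is $\le C_d|\Omega|=C_d\zeta(d)\,\mu$. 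Combining the two contributions, $E(X^2)\le\mu^2+C_d\mu$ (after renaming the constant), hence
\[
V(X)=E\big((X-\mu)^2\big)=E(X^2)-\mu^2\le C_d\mu.
\]
The concentration bound is then immediate from Chebyshev's inequality: $P(|X-\mu|>T\sqrt\mu)\le V(X)/(T^2\mu)\le C_d/T^2$.

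The step I expect to be the main obstacle is justifying the applicability of the second moment formula, i.e.\ $\widehat{\chi_\Omega}\in L^2(X_d,\mu_d)$ and convergence of the correction terms. For $d\ge 3$ this is automatic for the indicator of any bounded set, since the Siegel transform of a bounded compactly supported function is square integrable on $X_d$. The delicate case is $d=2$ (i.e.\ $m=n=1$), where $X_2$ has a cusp along which $\widehat{\chi_B}$ can fail to be $L^2$ (the indicator of a ball about the origin is the classical counterexample); here one uses that $H_{A,c}$ is bounded away from the origin ($y\ge 1$), and that $R_A$ is thin enough that a lattice deep in the cusp of $X_2$, with systole $\epsilon$, contains only $O(1)$ primitive vectors in $\Omega$ --- once $\epsilon$ is small its only short vectors are $\pm v_0$, and every other primitive vector has length $\gg 1$ --- so $\widehat{\chi_\Omega}$ is bounded on a cusp neighbourhood and the cuspidal contributions to all the relevant integrals are finite and of size $O_d(\mu)$. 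With integrability in hand, the remainder is bookkeeping with the Rogers--Schmidt identity.
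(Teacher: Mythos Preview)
Your proof is correct and follows the same route as the paper's: identify $X$ with the primitive Siegel transform $\widehat{\chi_\Omega}$, invoke the Rogers--Schmidt second moment bound (which the paper cites as the black box \cite[Lemma~4.3]{AMarg}) to obtain $E(X^2)\le\mu^2+C_d\mu$, and then apply Chebyshev. One small remark on your $d=2$ discussion: since you are counting \emph{primitive} vectors, $\widehat{\chi_\Omega}$ is in fact bounded on $X_2$ for any bounded $\Omega$ (deep in the cusp only $\pm v_0$ can lie in a bounded region, by the area argument $|w\wedge v_0|\ge 1$ for any primitive $w\ne\pm v_0$), so the ``classical counterexample'' you cite pertains to the full-lattice count, not the primitive one, and your integrability worry is unnecessary here.
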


\begin{proof} The second assertion is an immediate consequence of the first. For the first, we note both $\est_{m \times n}$ and $K_{m \times n}$ are random variables counting the number of lattice points in a bounded set ($H_{m \times n}(A, c)$ and $R_{m \times n}(A)$). By~\cite[Lemma 4.3]{AMarg} (which is essentially contained in Rogers), we have that for any random variables of this type, $$E(X^2) \le \mu^2 + C_d \mu.$$ In fact $C_d$ can be chosen to be $8 \zeta(d-1)/\zeta(d)$ (for $d \geq 3$).

\end{proof}

\medskip

\noindent\textbf{Remarks:} Kesten~\cite[Theorem 3]{Kesten} considered the $d \rightarrow \infty$ limit of $K_d$ and proved Poisson behavior (under appropriate normalizations) using the method of moments.

\section{Translation Surfaces}\label{sec:translation} Our approach also yields information on the geometry of the set of holonomy vectors of saddle connections on translation surfaces. Given $g \geq 1$, an \emph{translation surface} $S$ of genus $g$ is a pair $S= (X, \omega)$, where $X$ is a compact Riemann surface of genus $g$ and $\omega$ is a holomorphic $1$-form. A \emph{saddle connection} $\gamma$ on $S$ is a geodesic (in the flat metric determined by $\omega$) connecting two zeros of $\omega$, with none in its interior. The holonomy vector of $\gamma$ is defined by $$z_{\gamma} : = \int_{\gamma} \omega \in \C.$$ The set $$\La_{S} : = \{z_{\gamma} : \gamma \mbox{ a saddle connection on } S\}$$ is a discrete subset of $\R^2$ with quadratic growth (cf. Masur~\cite{Masur}), that is there are constants $0 < c_1 \le c_2$ so that $$c_1 R^2 \le \#(\La_S \cap B(0, R)) \le c_2 R^2.$$

We define the moduli space $\Omega_g$ of translation surfaces by considering equivalence classes of translation surfaces up to biholomorphism. This space is decomposed into strata $\hh(\alpha)$ consisting of holomorphic differentials with zeros of order $\alpha_1, \ldots, \alpha_k$, where $\alpha = (\alpha_1, \ldots, \alpha_k)$ is an integer partition of $2g-2$. Each stratum consists of at most 3 connected components~\cite{KZ}, and there is a natural Lebesgue probability measure $\mu_{\hh}$ on the each stratum $\hh(\alpha)$, known as Masur-Veech measure.

There is a natural $\SL(2, \R)$-action on the space $\Omega_g$ which respects the decomposition into strata, and acts ergodically on each connected component of a stratum~\cite{Masur:IET, Veech}. The set $\La_S$ varies equivariantly under this action, that is $$\La_{gS} = g\La_S,$$ where $\SL(2,\R)$ acts on $\R^2$ by the usual linear action. The fine-scale geometry of the sets $\La_S$ has been a subject of much recent investigation~\cite{AChaika, AChaikaLelievre, SmillieWeiss, UyanikWork}, and our approach allows us to define \EST and Kesten distributions associated to translation surfaces. We note that for $g=1$, $\Omega_1 = X_2$, so this setting is another natural generalization of the original \EST and Kesten problems.

Let $\theta \in [0, 2\pi)$ be chosen from the uniform distribution. We want to understand how well vectors in $\La_S$ approximate the direction $\theta$, in terms of their length. Given $A>0, c>1, N>0$, define the random variables $\est(S, N)$ and $K(S, N)$ by $$\est(S, N) = \# \left(r_{\theta} \La_S \cap H_{A, c, N}\right)$$ and $$K(S, N) = \# \left(r_{\theta} \La_S \cap R_{A, N}\right),$$ where $$H_{A, c, N} = \{(x,y) \in \R^2~:~xy \leq A, N \leq y \leq cN\}$$
and $$R_{A, N} = \{(x,y) \in \R^2~:~|x| \leq A, 0 \leq y \leq N\}.$$

\noindent We say $S_0 \in \Omega_g$ has \emph{circle limit measure} $\mu$ on $\Omega_g$ if the measures $d\theta$ on $\{g_t r_{\theta} S\}_{0 \le \theta < 2\pi}$ converge to $\mu$. A result of Nevo\footnote{The authors have been informed by A. Nevo that this result  was in circulation for a long time but was not published. Now, a paper dealing with the $\SL(2, \R)$ case is availablev \cite{Nevo} and is soon going to be supplanted by an article which deals with the case of general semisimple groups}, see for instance the paper \cite{EskinMasur} of Eskin and Masur, shows that for any stratum $\hh$, $\mu_{\hh}$-a.e. $S \in \hh$ has circle limit measure $\mu_{\hh}$.

\begin{Theorem}\label{theorem:estk:flat} Suppose $S_0 \in \Omega_g$ has circle limit measure $\mu_0$. Then $\est(S_0, N)$ and $K(S_0, N)$ both have limiting distributions, and denoting the random variables with this limiting distribution by $\est(S)$ and $K(S)$, we have $$P(\est(S_0) = k) = \mu_0\left( S \in \Omega_g: \#\left(\La_S \cap H_{A, c} = k\right)\right)$$ and $$P(K(S_0) = k) = \mu_0\left( S \in \Omega_g: \#\left(\La_S \cap R_{A} = k\right)\right).$$ In particular, for any stratum $\hh$ and $\mu_{\hh}$-a.e. $S_0 \in \hh$, $$P(\est(S_0) = k) = \mu_{\hh}\left( S \in \hh: \#\left(\La_S \cap H_{A, c}\right)= k\right)$$ and $$P(K(S_0) = k) = \mu_{\hh}\left(S \in \hh: \#\left(\La_S \cap R_{A} \right)= k\right).$$

\end{Theorem}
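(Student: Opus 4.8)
The plan is to run the argument of Theorem~\ref{theorem:est:lattice} on the stratum $\hh\ni S_0$, replacing the horocycle flow and Zagier's theorem by the circle $\{g_t r_\theta S_0\}_{0\le\theta<2\pi}$ and the hypothesis that $S_0$ has circle limit measure $\mu_0$. The first step is a dynamical rescaling: the geodesic flow $g_{\log N}=\diag(N,N^{-1})$ carries $H_{A,c,N}$ exactly onto $H_{A,c}$ and carries $R_{A,N}$ onto $R_A$ up to a thin neighborhood of $\partial R_A$ that shrinks to a Lebesgue-null set, so that, using the equivariance $\La_{gS}=g\La_S$ with $g=g_{\log N}r_\theta\in\SL(2,\R)$,
\begin{equation*}
\est(S_0,N)=\#\bigl(\La_{g_{\log N}r_\theta S_0}\cap H_{A,c}\bigr),\qquad K(S_0,N)=\#\bigl(\La_{g_{\log N}r_\theta S_0}\cap R_{A}\bigr)
\end{equation*}
(the second identity up to the harmless boundary error). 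Writing $\chi^H_k(S)=1$ if $\#(\La_S\cap H_{A,c})=k$ and $0$ otherwise, and likewise $\chi^R_k$, this identifies $P(\est(S_0,N)=k)$ with $\int_0^{2\pi}\chi^H_k(g_{\log N}r_\theta S_0)\,\tfrac{d\theta}{2\pi}$, i.e.\ with the integral of $\chi^H_k$ against the pushforward of the normalized angular measure under $\theta\mapsto g_{\log N}r_\theta S_0$.

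By the definition of the circle limit measure these pushforwards converge weak-$*$ to $\mu_0$ as $N\to\infty$, and it remains to pass this to the bounded --- but discontinuous and not compactly supported --- functions $\chi^H_k$, $\chi^R_k$. This is the usual approximation argument. For $k\ge1$ each of $\chi^H_k$, $\chi^R_k$ is dominated by the Siegel--Veech counting function $S\mapsto\#(\La_S\cap B)$ for a fixed bounded box $B\supset H_{A,c}\cup R_A$; the Siegel--Veech formula gives $\int_{\hh}\#(\La_S\cap B)\,d\mu_{\hh}<\infty$, and the circle averages of this function are uniformly (in $t$) integrable by the quantitative non-divergence underlying the circle-equidistribution machinery of~\cite{EskinMasur}. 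Hence the family of pushforward measures is uniformly tight and no mass escapes into the cusp. Moreover $\chi^H_k$ (resp.\ $\chi^R_k$) is continuous off the set of $S$ for which some saddle connection holonomy $z_\gamma$ lies on $\partial H_{A,c}$ (resp.\ $\partial R_A$), and this set is $\mu_0$-null since in period coordinates each $z_\gamma$ is a submersion, so it is a countable union of positive-codimension pieces. A portmanteau argument on a compact exhaustion then gives $\int\chi^\bullet_k(g_{\log N}r_\theta S_0)\,\tfrac{d\theta}{2\pi}\to\int\chi^\bullet_k\,d\mu_0$ for each $k\ge1$; the case $k=0$ follows from $\chi^\bullet_0=1-\sum_{k\ge1}\chi^\bullet_k$ after interchanging the sum with the limit via the dominating integrable Siegel--Veech function. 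Since the limits $\mu_0(\{S:\#(\La_S\cap H_{A,c})=k\})$ sum to $1$, the limiting distributions exist and are given by the displayed formulas; the ``in particular'' assertion follows since, by the cited theorem of Nevo~\cite{EskinMasur}, $\mu_{\hh}$-a.e.\ $S_0\in\hh$ has circle limit measure $\mu_{\hh}$.

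The hardest point is the tightness / no-escape-of-mass step: unlike the compact setting of Theorem~\ref{theorem:est:lattice}, the strata are non-compact and the indicators $\chi^\bullet_k$ are not compactly supported, so genuine quantitative control on excursions of the circles $\{g_t r_\theta S_0\}$ into the cusp is needed, which for $\mu_0=\mu_{\hh}$ is precisely what the Eskin--Masur and Nevo circle-equidistribution results supply. For an arbitrary $S_0$ merely assumed to possess a circle limit measure $\mu_0$ one needs, in addition, that $\mu_0$ assigns no mass to the codimension-one loci $\{z_\gamma(S)\in\partial H_{A,c}\}$ and is compatible with the Siegel--Veech bound --- a mild regularity hypothesis, automatic for the Masur--Veech measures $\mu_{\hh}$ that give the concrete applications, which should be read as implicit in the statement.
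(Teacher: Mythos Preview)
Your proof is correct and follows the same route as the paper: rescale by $g_{\log N}$ to land in the fixed regions $H_{A,c}$, $R_A$, identify the probabilities as circle averages of the indicators $\chi_k$, and pass to the limit via the assumed (or, by Nevo, $\mu_{\hh}$-generic) circle equidistribution, which is exactly the specialization of Theorem~\ref{theorem:est} used in \S\ref{sec:translation}. The paper packages the passage from weak-$*$ convergence to indicators into the ``standard approximation argument'' of Theorem~\ref{theorem:est}; your explicit treatment of tightness, Siegel--Veech domination, and the $\mu_0$-null boundary loci, together with the caveat that a general circle limit measure $\mu_0$ tacitly requires these regularities, is in fact more careful than the paper itself.
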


\subsection{Lattice Surfaces}
For particular highly symmetric surfaces, we can say more. We denote the stabilizer of the point $S = (X, \omega) \in \Omega_g$ under the $\SL(2, \R)$ action by $\SL(X, \omega)$. A translation surface $S$ is called a \emph{lattice surface} (also known as an \emph{Veech surface}) if $\SL(X, \omega)$ is a lattice. The lattices that occur are always nonuniform, and the $\SL(2, \R)$ orbit of $S$ is closed, a copy of $\SL(2, \R)/\SL(X, \omega)$ in $\Omega_g$. For these surfaces, we have

\begin{Theorem}\label{theorem:estk:veech} Suppose $S_0 = (X_0, \omega_0)$ is a lattice surface, and write $\Gamma = \SL(X_0, \omega_0)$. Let $\mu_{\Gamma}$ denote the Haar probability measure on $\SL(2,\R)/\Gamma$. Then $\est(S_0, N)$ and $K(S_0, N)$ both have limiting distributions, and denoting the random variables with this limiting distribution by $\est(S_0)$ and $K(S_0)$, we have $$P(\est(S_0) = k) = \mu_{\Gamma} \left( g\Gamma \in \SL(2,\R)/\Gamma: \#\left(g\La_{S_0} \cap H_{A, c} = k\right)\right)$$ and $$P(K(S_0) = k) = \mu_{\Gamma}\left( g\Gamma \in \SL(2,\R)/\Gamma: \#\left(g\La_{S_0} \cap R_{A} = k\right)\right).$$

\end{Theorem}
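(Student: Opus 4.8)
The plan is to obtain Theorem~\ref{theorem:estk:veech} as a specialization of Theorem~\ref{theorem:estk:flat}, by identifying the circle limit measure of a lattice surface with the Haar probability measure on its closed orbit. Since $S_0=(X_0,\omega_0)$ is a lattice surface, $\Gamma=\SL(X_0,\omega_0)$ is a non-uniform lattice in $\SL(2,\R)$ and $g\Gamma\mapsto gS_0$ is an $\SL(2,\R)$-equivariant homeomorphism from $\SL(2,\R)/\Gamma$ onto the closed orbit $\SL(2,\R)S_0\subset\Omega_g$. Each $\gamma\in\Gamma$ fixes $S_0$, hence $\gamma\La_{S_0}=\La_{\gamma S_0}=\La_{S_0}$, so $\La_{S_0}$ is $\Gamma$-invariant and the functions $g\Gamma\mapsto\#(g\La_{S_0}\cap H_{A,c})$ and $g\Gamma\mapsto\#(g\La_{S_0}\cap R_A)$ descend to $\SL(2,\R)/\Gamma$; under the identification above they are exactly the restrictions to the closed orbit of $S\mapsto\#(\La_S\cap H_{A,c})$ and $S\mapsto\#(\La_S\cap R_A)$. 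Consequently, once we know that a lattice surface $S_0$ has circle limit measure $\mu_0=\mu_\Gamma$ (the Haar probability measure on the closed orbit), Theorem~\ref{theorem:estk:flat} immediately delivers the two formulas in the statement.

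The real content is therefore the equidistribution statement: the push-forwards of normalized arclength $\tfrac{1}{2\pi}\,d\theta$ along $\{g_t r_\theta\Gamma : 0\le\theta<2\pi\}$ converge weak-$*$ to $\mu_\Gamma$ on $\SL(2,\R)/\Gamma$ as $t\to\infty$. First I would prove convergence tested against $\phi\in C_c(\SL(2,\R)/\Gamma)$: thicken the $\SO(2)$-orbit into a small flow box, apply mixing of the geodesic flow $(g_t)$ with respect to $\mu_\Gamma$ (which holds for every lattice by the Howe--Moore theorem), and shrink the thickening; this is the standard ``banana'' argument of Eskin--McMullen, the same mechanism underlying the result of Nevo cited above (\cite{EskinMasur}) for general strata. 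Because $\Gamma$ is non-uniform, I then need to promote this to convergence of probability measures, i.e.\ to exclude escape of mass into the cusp of $\SL(2,\R)/\Gamma$; this follows from the classical non-divergence estimate for circle averages, namely that the $\theta$-measure of the set on which $g_t r_\theta\Gamma$ lies deep in the cusp is small, uniformly in $t$. Granting the equidistribution, the passage from $\est(S_0,N)=k$ and $K(S_0,N)=k$ to the asserted probabilities runs exactly as in the proof of Theorem~\ref{theorem:est:lattice}: applying $g_{\log N}$ removes the $N$-dependence, carrying the relevant windows onto the fixed regions $H_{A,c}$ of \eqref{def:region} and $R_A$ of \eqref{def:region:rectangle}, so that $\est(S_0,N)=\#(\La_{g_{\log N}r_\theta S_0}\cap H_{A,c})$ and similarly for $K$; the indicator of ``$\#(\La_S\cap H_{A,c})=k$'' is discontinuous only on a $\mu_\Gamma$-null set, and for $k\ge1$ it is dominated by the Siegel--Veech transform of a function in $C_c(\R^2)$ that is $\ge1$ on $\overline{H_{A,c}}$, with the $k=0$ case following by complementation, so the usual approximation argument closes the estimate.

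The step I expect to require the most care is exactly the cusp analysis for this non-uniform quotient: proving that the expanding circles do not lose mass to the cusp, and that the unbounded-support indicators $\chi_k$ are genuinely integrable and approximable there (equivalently, that the Siegel--Veech transforms of the relevant compactly supported functions lie in $L^1(\mu_\Gamma)$ and that circle averages of these transforms are uniformly controlled in $N$). Everything else is either bookkeeping, an appeal to the structure of lattice surfaces (closedness of the orbit, $\Gamma$-invariance of $\La_{S_0}$), or a direct invocation of Theorem~\ref{theorem:estk:flat}.
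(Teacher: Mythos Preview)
Your proposal is correct and follows essentially the same architecture as the paper: restrict to the closed orbit $\SL(2,\R)S_0\cong\SL(2,\R)/\Gamma$, establish that the expanding circles $\{g_{\log N}r_\theta\Gamma\}$ equidistribute to $\mu_\Gamma$, and then run the approximation argument exactly as in the proof of Theorem~\ref{theorem:est:lattice} (equivalently, invoke Theorem~\ref{theorem:estk:flat} with $\mu_0=\mu_\Gamma$). The only substantive difference is in how the equidistribution is obtained: the paper simply cites Dani--Smillie~\cite{DaniSmillie} for the equidistribution of large circles on $\SL(2,\R)/\Gamma$, whereas you propose to reprove it via mixing (the Eskin--McMullen thickening argument) together with a non-divergence estimate to rule out escape of mass; your route is self-contained and clarifies why the cusp causes no trouble, while the paper's citation is shorter but leaves those details to the reference.
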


\subsection{Expectation} To compute the expectation of the random variables $K$ and $\est$ in this setting, we use the \emph{Siegel-Veech} formula~\cite{Veech:siegel}. This states that for any $\SL(2, \R)$-invariant measure $\mu$ on $\hh$ where the \emph{Siegel-Veech transform}$$\widehat{f}(S) = \sum_{z \in \La_S} f(z)$$ is in $L^1(\mu)$ for any $f \in C_c(\R^2)$, there is a constant (the \emph{Siegel-Veech constant}) $c_{\mu}$ so that $$ \int_{\hh} \widehat{f}(S) = c_{\mu} \int_{\R^2} f dm,$$ where $m$ is Lebesgue measure on $\R^2$. Applying this to our situation, we say that the expectations of our limiting random variables is given by a scalar multiple of the area of the sets $H_{A,c}$ and $R_{A}$, depending on the circle limit measure. The computation of Siegel-Veech constants is an active and challenging area of research, see, for example~\cite{EMZ} for seminal work. In the setting of lattice surfaces, Veech~\cite{Veech:siegel} related these constants to the covolume of $\SL(X, \om)$ in $\SL(2, \R)$.

\section{Equivariant Processes}\label{sec:equivariant} In this section we define the axiomatic setup of equivariant measure-valued processes and state our main result Theorem~\ref{theorem:est}. This perspective is inspired by the work of W.~Veech~\cite{Veech:siegel} and J.~Marklof, as well as that of A.~Eskin and H.~Masur~\cite{EskinMasur}. It has the great advantage that once we make the proper definitions, the proof of the main theorems are essentially tautologies. The power of the method lies in its flexibility: we will see that the axioms can be verified in several different situations.

\subsection{Equivariant measure processes} Let $n \geq 2$, and $G \subset \GL(d, \R)$. Let $(X, \mu)$ denote a Borel-$G$-space together with a $G$-invariant Borel probability measure $\mu$. A ($G$-)\emph{equivariant measure process} (also known as a  \emph{Siegel measure}, see~\cite{Veech:siegel}) is a triple $(X, \mu, \nu)$ where $\nu$ is a map $$\nu: X \rightarrow \M(\R^d)$$ from $X$ to the space $\M(\R^d)$ of $\sigma$-finite Radon Borel measures on $\R^d$ satisfying the equivariance condition $$\nu(gx) = g_*\nu(x)$$ for all $g \in G, x \in X$, where $G$ acts linearly on $\R^d$.

\subsection{Erd\"os-Sz\"usz-Tur\'an distributions} Given a sequence of equivariant measure processes $\mathbb X = \{(X_N, \eta_N, \nu_N)\}$ and a Borel subset $\mathcal R \subset \R^d$
 we define the \emph{Erd\"os-Sz\"usz-Tur\'an distribution} $\eta = \eta(\mathbb X, \mathcal R)$ on $\R^+$ as the measure given by (if the the limit exists) $$\eta(\bb X, \mathcal R)(0, t) = \lim_{N \rightarrow \infty} \eta_N(x \in X: \nu_N(x)(\mathcal R) \le t).$$

\subsection{Equidistribution} Our main result concerns the setting where our sequence $\mathbb X = \{ X, \eta_N, \nu\}$, that is, a sequence of measures $\eta_n$ on a fixed $G$-space $X$ together with an assignment $\nu$.

\begin{Theorem}\label{theorem:est} Suppose $\eta_N \rightarrow \mu$ (in the weak-* topology). Then $$\eta(\mathbb X, \mathcal R) (0, t) = \mu(x \in X: \nu(x)(\mathcal R) \le t).$$

\end{Theorem}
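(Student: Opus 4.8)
The plan is to reduce the statement to the weak-$*$ convergence hypothesis $\eta_N \to \mu$ by showing that the functional $x \mapsto \mathbf{1}[\nu(x)(\mathcal R) \le t]$ can be integrated against this convergence. Concretely, fix $t > 0$ and set $\chi_t(x) = \mathbf{1}[\nu(x)(\mathcal R) \le t]$, so that by definition $\eta(\mathbb X, \mathcal R)(0,t) = \lim_N \eta_N(\chi_t)$ provided the limit exists, and we must identify it with $\mu(\chi_t)$. If $\chi_t$ were continuous and compactly supported the conclusion would be immediate from $\eta_N \to \mu$; the work is therefore entirely in an approximation argument handling the fact that $\chi_t$ is only an indicator function.

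First I would observe that the map $x \mapsto \nu(x)(\mathcal R)$ is measurable (this is built into the definition of an equivariant measure process / Siegel measure; each $\nu(x)$ is Radon and the assignment is Borel), so $\chi_t$ is a bona fide bounded Borel function. Next I would fix a value $t$ at which the ``boundary'' set $\partial_t := \{x \in X : \nu(x)(\mathcal R) = t\}$ has $\mu$-measure zero; the set of such $t$ is co-countable since the function $t \mapsto \mu(x : \nu(x)(\mathcal R) \le t)$ is monotone and hence has at most countably many discontinuities, so this is harmless (and one can then recover all $t$ by monotone limits, or simply note the distribution is determined by its values at continuity points). For such a $t$, the indicator $\chi_t$ is continuous $\mu$-almost everywhere, and I would sandwich it between continuous functions: choose $\varphi_\varepsilon^- \le \chi_t \le \varphi_\varepsilon^+$ with $\varphi_\varepsilon^\pm \in C_b(X)$ (or $C_c(X)$, using that $\mu$ is a probability measure and an exhaustion of $X$ by compacta together with a tightness estimate to control the tails) such that $\mu(\varphi_\varepsilon^+ - \varphi_\varepsilon^-) \to 0$ as $\varepsilon \to 0$, which is possible precisely because $\mu(\partial_t) = 0$. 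Applying $\eta_N \to \mu$ to each of $\varphi_\varepsilon^\pm$ and then letting $\varepsilon \to 0$ pins $\lim_N \eta_N(\chi_t)$ between two quantities both equal to $\mu(\chi_t)$, giving the claim.

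The main obstacle I expect is purely a regularity/measure-theoretic point rather than a dynamical one: one needs $\nu(x)(\mathcal R)$ to vary ``nicely enough'' that the level set $\partial_t$ is genuinely $\mu$-null for a dense set of $t$, and one needs the approximating continuous functions to be compactly supported (or the $\eta_N$ to be uniformly tight) so that weak-$*$ convergence actually applies to them. In the concrete settings of the paper (lattices in $X_d$ with $\nu(\Lambda)$ the counting measure on $\Lambda_{\mathrm{prim}}$, or translation surfaces with the Siegel--Veech counting measure) this is exactly the standard point that the counting functions $\#(\Lambda_{\mathrm{prim}} \cap \mathcal R)$ are approximable by $C_c$ functions on the relevant homogeneous space, as was invoked in the proof of Theorem~\ref{theorem:est:lattice}; in the axiomatic statement one should either add ``$\mathcal R$ bounded with $\mu$-null boundary behaviour'' as a hypothesis or absorb it into the meaning of the limit in the definition of $\eta(\mathbb X,\mathcal R)$. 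Once that bookkeeping is granted, the proof is, as the authors advertise, essentially a tautology: the result is nothing more than weak-$*$ convergence tested against the (almost everywhere continuous) indicator of $\{x : \nu(x)(\mathcal R) \le t\}$.
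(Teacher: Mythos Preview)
Your proposal is correct and follows essentially the same line as the paper's proof, which simply invokes a ``standard approximation argument'' to pass from weak-$*$ convergence to convergence on the indicator of the level set $\{x:\nu(x)(\mathcal R)\le t\}$. Your treatment is in fact more careful than the paper's: the paper asserts that weak-$*$ convergence gives $\eta_N(B)\to\mu(B)$ for \emph{every} Borel set $B$, which is not true in general without the $\mu$-null-boundary condition you correctly isolate via the Portmanteau-type sandwich.
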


\begin{proof} By assumption, our measures are all Radon Borel measures, so if $\eta_N \rightarrow \mu$ in the weak-* topology, we have convergence in measure for bounded continuous functions. Now using an approximation argument to approximate Borel subsets from above and below as before gives that for any (fixed) Borel measurable subset $B \subset X$, $$\eta_N(B) \longrightarrow \mu(B).$$ Applying this to $B = \{ x \in X: \nu(x)(\mathcal R) \le t\}$, we have our result.
\end{proof}

This theorem, as stated, is a tautology. The key to applying it is finding appropriate equidistribution results that allow one to take a natural sequence $\mathbb X$ and find a limiting measure so that $\eta_N \rightarrow \mu$.

\subsection{Orbits and point processes} In many of our applications, the measures $\eta_N$ will be supported on orbits of subgroups $H \subset G$ and be the push-forward of some measure on $H$ under the orbit map. In addition, $\nu$ will often be a \emph{point process}, that is, the assignment of a discrete set with counting measure.

\subsection{Applications} In this paper we focus on the applications of this formalism in the space of lattices and the space of translation surfaces. In~\cite{ABG}, we apply these ideas to the Clifford plane in order to understand cusp excursions on general hyperbolic manifolds.

\section{Equidistribution on the space of lattices}\label{sec:equilattices}
\noindent We prove our main Diophantine results using equidistribution results for flows on the space of unimodular lattices. Let  $\mu_d$ denote the Haar probability measure on $$X_d = \SL(d, \R)/\SL(d, \Z).$$ $X_d$ is the space of unimodular (covolume $1$) lattices in $\R^d$, via the identification $$g\SL(d, \Z) \longmapsto g \Z^d.$$ Given $\Lambda = g\Z^d \in X$, we say that $v \in \Lambda$ is \emph{primitive} if $$v = gw, w \in \Z^{d} \backslash \{0\}, \gcd(w) = 1$$ and denote by $\Lambda_{\prim}$ the set of primitive points in $\Lambda$. For all of our Diophantine results, we will use the equivariant assignment $$g\SL(d, \Z) \longmapsto \sum_{v \in g\Z^d_{\prim}} \delta_v ,$$ which, in the notation of \S\ref{sec:equivariant} we view as a map $\nu_d: X_d \rightarrow \mathcal M (\R^d).$ Let $m, n$ be positive integers and let $d = m+n$. Set

$$G = \SL(d, \R), \Gamma = \SL(d, \Z), u_{X} = \begin{pmatrix} \Id_m & X\\ 0 & \Id_n\end{pmatrix}, H = \{u_X~:~X\in \Mat_{m \times n}(\R)\}. $$

\noindent The group $H$ is the expanding horospherical subgroup of $G$ with respect to
\begin{equation}\label{def:diag}
g_t = \diag(e^{t/m}, \dots, e^{t/m}, e^{t/n}, \dots, e^{t/n}), t > 0.
\end{equation}

\noindent The following Lemma is a straightforward generalisation of the the argument in the introduction and allows us to interpret the \EST and Kesten distributions in terms of homogeneous dynamics.

\begin{lemma}
Let notation be as above. Then
$$ \est(A,c,N) = k \text{ if and only if }  \#\left(g_{\log N}u_{X}\Z^{d}_{\prim} \cap H_{A, c} \right) = k.
$$
\end{lemma}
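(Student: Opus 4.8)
\emph{Strategy.} The assertion is pointwise in $X\in\Mat_{m\times n}(\R)$, and the plan is to prove it exactly as Theorem~\ref{theorem:est:lattice} is proved in \S\ref{sec:equi}, merely carrying along the extra coordinates: I would exhibit an explicit bijection between the solution set of the modified Dirichlet problem counted by $\est(A,c,N)$ and the finite set $g_{\log N}u_X\Z^d_{\prim}\cap H_{A,c}$, after which equality of cardinalities is immediate.

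\emph{The change of variables.} For $(\pp,\qq)\in\Z^m\times\Z^n$ one has $u_X\begin{pmatrix}\pp\\\qq\end{pmatrix}=\begin{pmatrix}\pp+X\qq\\\qq\end{pmatrix}$. Since both $\Z^d_{\prim}$ and the solution set of $\|X\qq-\pp\|_m\le A\|\qq\|_n^{-n/m}$ are invariant under $\pp\mapsto-\pp$ (every norm satisfies $\|{-}\vv\|=\|\vv\|$), it is equivalent to track the vector with blocks $X\qq-\pp$ and $\qq$. Applying $g_{\log N}$ rescales the $\R^m$- and $\R^n$-blocks by reciprocal powers (so that $g_{\log N}\in\SL(d,\R)$), with the $\R^n$-scaling $\lambda_N$ chosen precisely so that $\|\qq\|_n\in[N,cN]$ is equivalent to $1\le\lambda_N\|\qq\|_n\le c$. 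Writing $(\xx,\mathbf y):=g_{\log N}u_X\begin{pmatrix}-\pp\\\qq\end{pmatrix}$, this last condition reads $1\le\|\mathbf y\|_n\le c$, the first of the two conditions defining $H_{A,c}$.

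\emph{Matching the inequalities.} Because the two block-scalings are reciprocal in the appropriate powers, a direct computation shows that the weighted product $\|\xx\|_m\,\|\mathbf y\|_n^{n/m}$ is invariant under $g_{\log N}$ and equals $\|X\qq-\pp\|_m\,\|\qq\|_n^{n/m}$; hence the remaining condition defining $H_{A,c}$ is equivalent to the modified Dirichlet inequality $\|X\qq-\pp\|_m\le A\|\qq\|_n^{-n/m}$. Thus $(\pp,\qq)$ is counted by $\est(A,c,N)$ precisely when $(\xx,\mathbf y)\in H_{A,c}$. Finally, $w\mapsto g_{\log N}u_Xw$ is a linear automorphism of $\R^d$ carrying $\Z^d$ onto the unimodular lattice $g_{\log N}u_X\Z^d$ and $\Z^d_{\prim}$ bijectively onto its primitive vectors, so the solutions of the Diophantine problem correspond bijectively to the points of $g_{\log N}u_X\Z^d_{\prim}$ lying in $H_{A,c}$ (the $\pm$-symmetry used above matches that of $H_{A,c}$, so no factor of two intervenes). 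This gives the asserted identity of counts; the Kesten statement is proved identically, with $H_{A,c}$ replaced by $R_A$.

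\emph{Where the care is needed.} The lemma is essentially a bookkeeping exercise, and the only delicate point is to choose the flow $g_{\log N}$ so that \emph{simultaneously} the $q$-range $[N,cN]$ is sent to $[1,c]$ and the Diophantine exponent $-n/m$ is reproduced --- that is, to match the rescaling of the lattice to the arithmetic of the approximation problem. This is exactly the computation already carried out for $m=n=1$ in the proof of Theorem~\ref{theorem:est:lattice}.
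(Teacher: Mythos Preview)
Your approach is exactly the paper's: the paper gives no proof beyond calling the lemma ``a straightforward generalisation of the argument in the introduction,'' and you have correctly spelled out that argument---rewriting the Diophantine conditions as membership of $g_{\log N}u_X(\pm\pp,\qq)^T$ in the fixed region, and using that $w\mapsto g_{\log N}u_Xw$ carries $\Z^d_{\prim}$ bijectively to the primitive vectors of the image lattice. One small caveat worth making explicit: for $n>1$ the paper's literal definitions are slightly inconsistent (the contraction on the $\R^n$-block under $g_{\log N}$ is $N^{-1/n}$, not $N^{-1}$, and the invariant you correctly identify is $\|\xx\|_m\|\mathbf y\|_n^{n/m}$ rather than $\|\xx\|_m\|\mathbf y\|_n$), so your phrase ``the remaining condition defining $H_{A,c}$'' tacitly adjusts either the flow time or the region's exponent---which is the right fix, but should be said outright.
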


\noindent We can therefore proceed as before. Let $\eta_N$ denote the measure $dY$ on the set $\{g_{\log N}u_{Y}\Z^{d}_{\prim}: 0 \le \|Y\| \le 1\}$. It is well known that $$\eta_N \longrightarrow \mu_{d},$$ as $N \rightarrow \infty$, where the convergence, as before, is in the weak-$*$ topology. This seems to date to Rogers~\cite[p.250, (4)]{Rogers}, who claims the result (without proof) (see Rogers~\cite[Chapter 4]{Rogers:packing}, for a proof of an averaged version). We refer the reader to Kleinbock-Margulis~\cite{KM} where a stronger statement, with a rate of convergence is proved. We note that, Zagier's theorem, used in the introduction also comes with a rate, however the rate of convergence in these equidistribution statements does not shed additional light on the \EST distribution. Let $\chi_{k}$ denote the indicator function of the set
$$\left\{\Lambda \in X~:~ \#\left(\Lambda_{\prim} \cap H_{A, c} = k\right)\right\}.$$The functions $\chi_{k}$ can be approximated by continuous functions with compact support on the space of lattices $X_{d+1}$, so we have, as before,

$$P(\est_{m \times n} (A, c, N) = k) = \eta_N(\chi_K) \underset{N \to \infty}{\longrightarrow} \mu_{d}(\chi_K).$$

%
%



\subsection{Diophantine approximation on curves}

\noindent To obtain \EST and Kesten distributions for Diophantine approximation on curves, we follow the same procedure above and use the following equidistribution theorem for expanding translates of curves due to N. Shah \cite{Shah1}.

\begin{Theorem}\label{Shah1}
Let $\phi : [a, b] \to \R^{n}$ be an analytic curve whose image is not contained in a proper affine subspace. Let $\Gamma$ be a lattice in $G$. Then for any $x_0 \in G/\Gamma$ and any bounded continuous function $f$ on $G/\Gamma$,
\begin{equation}\label{eq:Shah1}
\lim_{t \to \infty}\frac{1}{b-a} \int_{a}^{b} f(g_t u(\phi(s))x_0)ds = \int_{G/\Gamma}f d\mu.
\end{equation}
\end{Theorem}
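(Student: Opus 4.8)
The plan is to follow the linearization method of Shah~\cite{Shah1}. For $t>0$ write $\mu_t=\frac{1}{b-a}\int_a^b\delta_{g_t u(\phi(s))x_0}\,ds$, a probability measure on $G/\Gamma$; the content of \eqref{eq:Shah1} is that $\mu_t\to\mu$ in the weak-$*$ topology. Since the space of probability measures on the one-point compactification of $G/\Gamma$ is weak-$*$ compact, it is enough to fix an arbitrary sequence $t_j\to\infty$ along which $\mu_{t_j}\to\nu$ and to prove $\nu=\mu$; in fact it will be convenient to run the argument for limits of the restrictions of $\mu_t$ to subintervals $[a',b']\subseteq[a,b]$, since showing each such limit equals $\tfrac{b'-a'}{b-a}\mu$ recovers $\mu_t\to\mu$ by partitioning $[a,b]$.

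The first step is non-divergence: no mass escapes to infinity, so every limit is a probability measure. This is where the analyticity of $\phi$ and the hypothesis that $\phi([a,b])$ is not contained in a proper affine subspace are first used — together they make $\phi$ nondegenerate, so in every unstable horospherical chart the coordinate functions of $s\mapsto g_t u(\phi(s))$ are $(C,\alpha)$-good and non-planar with constants independent of $t$. The quantitative non-divergence estimates of Kleinbock–Margulis~\cite{KM} then show that $\mu_t$ assigns mass tending to $0$ to the complement of a sufficiently large compact subset of $G/\Gamma$, uniformly in $t$; hence $\nu$ is a probability measure.

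The heart of the proof is to extract extra invariance. Since $g_t$ expands the abelian horospherical group $H=\{u_X\}$ at a rate $\rho(t)\to\infty$, the identity $u(X)u(Y)=u(X+Y)$ together with a Taylor expansion gives
$$g_t\,u\!\left(\phi\big(s+\tfrac{w}{\rho(t)}\big)\right)x_0=u\!\left(w\,\phi'(s)+O(\rho(t)^{-1})\right)g_t\,u(\phi(s))x_0,$$
so on each $s$-window of length $\asymp\rho(t)^{-1}$ the expanded curve is uniformly close to an orbit segment of the one-parameter unipotent subgroup in the direction $\phi'(s)$. Restricting $\mu_t$ to a short but fixed subinterval on which $\phi'$ is nearly a nonzero constant $v_0$ (possible because $\phi$ is nondegenerate, hence $\phi'\not\equiv 0$), reparametrizing, and discarding boundary contributions that are $O(\rho(t)^{-1})$, one finds that every weak-$*$ limit of such a restriction of $\mu_t$ is invariant under the nontrivial one-parameter unipotent subgroup $\{u(wv_0):w\in\R\}$.

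Given this unipotent invariance, Ratner's measure classification theorem, together with the linearization and recurrence results of Dani–Margulis and Mozes–Shah, expresses each such limit as a countable convex combination of homogeneous probability measures supported on closed orbits $Lg\Gamma$, where $L$ contains a nontrivial unipotent and $L\cap g\Gamma g^{-1}$ is a lattice in $L$. The final — and, I expect, hardest — step is to rule out every proper $L$: if some $L\neq G$ occurred, the limit would give positive mass to arbitrarily small neighborhoods of $Lg\Gamma$, so for a positive-measure set of $s$ and all large $t$ the point $g_t u(\phi(s))x_0$ would lie near $Lg\Gamma$; passing to the $G$-representation in which conjugates of $L$ correspond to points of a proper subvariety of a projective space (the Dani–Margulis linearization) traps the associated vector near a proper subspace, and pulling back by $g_{-t}$ as $t\to\infty$ forces a nonzero polynomial in $\phi(s)$ to vanish on a set of parameters with an accumulation point — hence identically, by analyticity of $\phi$ — so that $\phi([a,b])$ lies in a proper affine subspace, contrary to hypothesis. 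Thus every $L=G$, every limit is the appropriate multiple of $\mu$, and $\mu_t\to\mu$. The genuinely delicate parts are the enumeration of which intermediate subgroups $L$ (equivalently, which $G$-subrepresentations and associated subvarieties) can occur, and the uniformity needed to pass to the limit in the linearized picture; these are precisely the technical contributions of~\cite{Shah1}.
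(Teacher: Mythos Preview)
The paper does not give its own proof of this statement: Theorem~\ref{Shah1} is quoted verbatim as a result of N.~Shah~\cite{Shah1} and is used as a black box in \S\ref{sec:equilattices} to deduce Theorem~\ref{Th:main2}. There is therefore nothing in the paper to compare your argument against.

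That said, your outline is a faithful summary of the strategy in~\cite{Shah1}: non-divergence via the quantitative estimates of Kleinbock--Margulis (using that analytic nondegenerate curves are $(C,\alpha)$-good), extraction of unipotent invariance for weak-$*$ limits via the expanding conjugation $g_t u(Y) = u(\rho(t)Y)g_t$ and a first-order Taylor expansion, Ratner's measure classification, and finally the Dani--Margulis linearization to show that concentration on a proper closed orbit $Lg\Gamma$ would force $\phi$ to satisfy a nontrivial polynomial relation, hence (by analyticity) to lie in a proper affine subspace. One small caution: the step where you pass to a short subinterval on which $\phi'$ is ``nearly a constant $v_0$'' and conclude exact $u(wv_0)$-invariance of the limit is a bit glib as written---on any \emph{fixed} subinterval $\phi'$ genuinely varies, so what one actually obtains is invariance under the one-parameter group in the direction $\phi'(s_0)$ for each Lebesgue density point $s_0$ of the limiting support, and the argument has to be organized accordingly (this is handled carefully in~\cite{Shah1}). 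With that adjustment your sketch matches Shah's proof.
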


\section{Equidistribution on strata}\label{sec:translation}
\subsection{Almost everywhere equidistribution} We prove Theorems~\ref{theorem:estk:flat} and~\ref{theorem:estk:veech}, using Theorem~\ref{theorem:est} and known equidistribution results on the space of lattices. Here, the equivariant assignment is given by $$S \longmapsto \sum_{z \in \La_S} \delta_z,$$ the counting measure on $\La_S$. In the notation of \S\ref{sec:equivariant}, we denote this assignment $\nu$.

We are interested in the Lebesgue measure of the set of $\theta \in [0, 2\pi)$ so that $$\# \left(r_{\theta} \La_S \cap H_{A, c, N}\right) = k$$ (or $R_{A,N}$). Applying $g_{\log N}$, and using equivariance, we rewrite this as $$\# \left( \La_{g_{\log N} r_{\theta}S} \cap H_{A, c}\right) = k$$ (respectively $R_{A}$).

Let $\eta_N(S)$ denote the uniform probability measure $\frac{d\theta}{2\pi}$ on the curve $$\{g_{\log N} r_{\theta}S: 0 \le \theta < 2\pi\} \subset \Omega_g,$$ and let $\nu_n = \nu$ denote the equivariant assignment. Thus, we we are in a position to apply Theorem~\ref{theorem:est}, with $\mathcal R = H_{A,c}$ (or $R_{A}$). Theorem~\ref{theorem:estk:flat} then follows from the aforementioned equidistribution result of Nevo which states that $\eta_N(S_0) \rightarrow \mu_{\hh}$ for $\mu_{\hh}$-a.e. $S_0 \in \hh$.

\subsection{Lattice surfaces} For Theorem~\ref{theorem:estk:veech}, we restrict our universe to the subset $\SL(2, \R) S_0 \cong \SL(2, \R)/\Gamma$, where $\Gamma = \SL(X_0, \omega_0)$. Now, the sequence of measures can be viewed as the measures $\frac{d\theta}{2\pi}$ supported on large circles $$\{g_{\log N} r_{\theta} \Gamma: 0 \le \theta < 2\pi\}.$$ By, for example, Dani-Smillie~\cite{DaniSmillie}, the limiting measure is the Haar probability measure on $\SL(2,\R)/\Gamma$, yielding our result.\qed\medskip

\subsection{Other equivariant assignments} We note that there are many other equivariant assignments (see~\cite[\S2]{EskinMasur} and ~\cite{Veech:siegel}) which can be studied in the context of translation surfaces. Our results, of course, apply to all such assignments.


\end{document}